\author{Felix Goldberg}
\address{Caesarea-Rothschild Institute, University of Haifa, Haifa, Israel}
\email{felix.goldberg@gmail.com}
\title{Spectral radius minus average degree: a better bound}
\date{July 15, 2013}
\newtheorem{thm}{Theorem}[]
\newtheorem{lem}{Lemma}[]
\newtheorem{cor}{Corollary}[]
\newtheorem{defin}{Definition}[]
\newtheorem{expl}{Example}[]
\newtheorem{qstn}{Question}[]
\DeclareMathOperator{\var}{var}
\begin{document}

\begin{abstract}
Collatz and Sinogowitz had proposed to measure the departure of a graph $G$ from regularity by the difference of the (adjacency) spectral radius and the average degree: $\epsilon(G)=\rho(G)-\frac{2m}{n}$. We give here new lower bounds on this quantity, which improve upon the currently known ones.
%%
%%We obtain here a new upper bound on $\epsilon(G)$ which seems to consistently outperform  the best known upper bound to date, due to Nikiforov. The method of proof may also be of independent interest, as we use notions from numerical analysis to re-cast the estimation of $\epsilon(G)$ as a special case of the estimation of the difference between Rayleigh quotients of proximal vectors.
\end{abstract}

\subjclass{05C07,05C50,15A42,26D20,26D15}

\keywords{irregularity, adjacency matrix,  average degree,  spectral radius}

\thanks{{This research was supported by the Israel Science Foundation (grant number 862/10.)}}

\maketitle

\section{Introduction}\label{sec:intro}
\subsection{Motivation}
%This paper is devoted to the examination of the relationship between two parameters of a graph: its spectral radius and its average degree. That the former is an upper bound on the later had been already observed in 1957 by Collatz and Sinogowitz. 

Let $G$ be a graph that has $n$ vertices and $m$ edges. The \emph{average degree} is $\overline{d}=\frac{2m}{n}$. Suppose now that $G$ has adjacency matrix $A$ and let us denote its spectral radius (\emph{i.e.} the largest modulus of an eigenvalue) by $\rho$. A classic 1957 result of Collatz and Sinogowitz \cite{ColSin57} is:
\begin{thm}\cite{ColSin57}\label{thm:cs}
Let $G$ be a graph with average degree $\overline{d}$ and spectral radius $\rho$. Then 
%\begin{equation}\label{eq:cs}
$$
\rho \geq \overline{d}
$$
%\end{equation}
and equality holds if and only if $G$ is regular.
\end{thm}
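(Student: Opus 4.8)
The plan is to exploit the variational (Rayleigh quotient) characterization of the largest eigenvalue. Since the adjacency matrix $A$ is real and symmetric, all of its eigenvalues are real; and because $A$ is moreover entrywise nonnegative, Perron--Frobenius guarantees that the spectral radius $\rho$ is itself an eigenvalue, so $\rho=\lambda_1$, the largest eigenvalue of $A$. The starting point is then the identity
$$
\rho=\lambda_1=\max_{x\neq 0}\frac{x^{T}Ax}{x^{T}x},
$$
which follows from the spectral theorem by expanding an arbitrary $x$ in an orthonormal eigenbasis of $A$.

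First I would test this maximum against the all-ones vector $\mathbf{1}$. A direct count gives $\mathbf{1}^{T}A\mathbf{1}=\sum_{i,j}A_{ij}=2m$, since each edge contributes $1$ to two symmetric entries, while $\mathbf{1}^{T}\mathbf{1}=n$. Feeding this into the Rayleigh quotient yields
$$
\rho\geq\frac{\mathbf{1}^{T}A\mathbf{1}}{\mathbf{1}^{T}\mathbf{1}}=\frac{2m}{n}=\overline{d},
$$
which is precisely the claimed inequality.

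The more delicate part is the equality characterization. Expanding $x=\mathbf{1}$ in the eigenbasis, the Rayleigh quotient equals $\lambda_1$ exactly when $\mathbf{1}$ has no component along any eigenvector of smaller eigenvalue, i.e. precisely when $\mathbf{1}$ is itself a $\rho$-eigenvector of $A$. Now the relation $A\mathbf{1}=\rho\mathbf{1}$ reads, coordinatewise, $\sum_{j}A_{ij}=d_i=\rho$ for every vertex $i$, which says exactly that all degrees coincide, that is, that $G$ is regular. For the converse, if $G$ is $r$-regular then $A\mathbf{1}=r\mathbf{1}$; combining $\rho\geq\overline{d}=r$ from above with the elementary row-sum bound $\rho\leq\max_i d_i=r$ forces $\rho=r=\overline{d}$.

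I expect the main obstacle to be pinning down the equality case rigorously. The inequality part only uses $R(x)\le\lambda_1$, but the equality statement requires the sharper fact that the Rayleigh quotient attains its maximal value \emph{only} on the top eigenspace, which needs the full spectral decomposition rather than a one-sided estimate. One should also take care that this argument is insensitive to the multiplicity of $\lambda_1$ and to whether $G$ is connected, since the coordinatewise reading of $A\mathbf{1}=\rho\mathbf{1}$ delivers regularity regardless of these features.
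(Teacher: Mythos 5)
Your proof is correct: the Rayleigh-quotient bound $\rho \geq \mathbf{1}^{T}A\mathbf{1}/\mathbf{1}^{T}\mathbf{1} = \overline{d}$, the equality analysis via the spectral decomposition (equality forces $\mathbf{1}$ into the top eigenspace, hence $A\mathbf{1}=\rho\mathbf{1}$ and all degrees equal $\rho$), and the row-sum bound for the converse are all sound, and you are right that the argument needs no connectivity or multiplicity assumptions. Note that the paper itself gives no proof of this theorem --- it is quoted from Collatz and Sinogowitz as a classical result --- so there is nothing internal to compare against; your argument is the standard variational one and stands as a complete, self-contained proof.
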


Theorem \ref{thm:cs} has served as the departure point for several interesting inquiries. As one particularly impressive recent example we may mention the independent discovery by Babai and Guiduli \cite{BabGui09} and by Nikiforov \cite{Nik10} of a spectral counterpart to the classic K\H{o}vari-S\'{o}s-Tur\'{a}n \cite{KovSosTur54} bound for the Zarankiewicz problem.

Another point of view inspired by Theorem \ref{thm:cs}
%, and the one which we shall adopt here, 
is to consider the difference $$\epsilon(G)=\rho-\overline{d}$$ as a measure for the irregularity of the graph $G$. This irregularity measure has been studied by various authors \cite{Extremal16,Bel92,CioGre07,Gol14collatz,Nik06,Nik07}.

\subsection{A brief digression about irregularity measures}
The simplest irregularity measure is that provided by the difference of the maximum and minimum degree (denoted, by $\Delta$ and $\delta$, respectively):
$$\Delta-\delta.$$
Though very simply defined and thus perhaps considered by some as too crude to be of use,this measure is actually quite useful in some contexts (cf. \cite{Yus13} for an example).  

Let us now introduce yet another irregularity measure, the variance of degrees:
$$
\var(G)=\frac{1}{n}\sum_{u \in V(G)}{\Big(d_{u}-\frac{2m}{n}\Big)^{2}}.
$$
Bell \cite{Bel92} compares $\epsilon(G)$ and $\var(G)$ for various classes of graphs. 

We wish to remark that the following relationship between $\Delta-\delta$ and $\var(G)$ is easily established by applying inequalities due to Popoviciu (cf. \cite[(1.4)]{ShaGupKap10}) and Nagy (cf. \cite[(1.5)]{ShaGupKap10}):
\begin{equation}\label{eq:var}
\frac{(\Delta-\delta)^{2}}{2n} \leq \var(G) \leq \frac{(\Delta-\delta)^{2}}{4}.
\end{equation}
The upper bound in \eqref{eq:var} has also been observed in \cite[p. 62]{ElpWoc13}.

%As we shall see, the measures $\epsilon(G)$ and $\sigma(G)$ are, in fact, related. To 

For more alternative notions of graph irregularity we refer the interested reader to \cite{Irreg87,Alb97,ElpWoc13}.

\subsection{Main result}
Our purpose in this paper is to improve the extant lower bounds for $\epsilon(G)$, using rather elementary methods. The best bound to be found in the literature is due to Nikiforov \cite{Nik06}:
\begin{thm}\cite{Nik06}\label{thm:nik}
For every graph $G$,
\begin{equation}\label{eq:nik}
\epsilon(G) \geq \frac{\var(G)}{\sqrt{8m}}.
\end{equation}
\end{thm}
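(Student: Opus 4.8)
The plan is to realize both $\overline{d}$ and $\var(G)$ as spectral data attached to a single cleverly chosen test vector, and then to exploit the one-sided boundedness of the spectrum. Write $A$ for the adjacency matrix and let $\rho=\lambda_1\geq \lambda_2\geq\cdots\geq\lambda_n=\lambda_{\min}$ be its eigenvalues with an orthonormal eigenbasis $v_1,\dots,v_n$; for a graph $A$ is nonnegative and symmetric, so by Perron--Frobenius the largest eigenvalue $\lambda_1$ is indeed the spectral radius $\rho$. I would take the normalized all-ones vector $x=\tfrac{1}{\sqrt n}\mathbf 1$ and compute its first two ``moments'': $\mu:=\langle Ax,x\rangle=\tfrac1n\mathbf 1^{\top}A\mathbf 1=\tfrac{2m}{n}=\overline d$ and $\langle A^2x,x\rangle=\tfrac1n\|A\mathbf 1\|^2=\tfrac1n\sum_{u}d_u^2$, so that the associated ``spectral variance'' $\sigma^2:=\langle A^2x,x\rangle-\mu^2$ equals precisely $\var(G)$. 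Expanding $x=\sum_i c_iv_i$ turns the weights $c_i^2$ (with $\sum_i c_i^2=1$) into a probability distribution on the eigenvalues under which the mean is $\overline d$ and the variance is $\var(G)$.

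The crux is an abstract gap inequality, and this is the step I expect to carry the real content. Let $X$ denote the eigenvalue-valued random variable just described and set $Y:=\rho-X\geq 0$, which is legitimate because $\rho$ is the largest eigenvalue. A direct computation gives $\mathbb E[Y]=\rho-\overline d$ and $\mathbb E[Y^2]=(\rho-\overline d)^2+\var(G)$. Since $0\leq Y\leq \rho-\lambda_{\min}$ pointwise, the elementary bound $Y^2\leq(\rho-\lambda_{\min})\,Y$ holds, and taking expectations yields $(\rho-\overline d)^2+\var(G)\leq(\rho-\lambda_{\min})(\rho-\overline d)$. Discarding the nonnegative term $(\rho-\overline d)^2$ on the left then gives
$$\rho-\overline d \geq \frac{\var(G)}{\rho-\lambda_{\min}}.$$

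It remains to control the denominator by a quantity depending only on $m$. Because $\rho$ is the \emph{largest modulus} of an eigenvalue we have $\rho\geq|\lambda_{\min}|$, whence $\rho-\lambda_{\min}\leq 2\rho$; and the classical trace identity $\sum_i\lambda_i^2=\Tr(A^2)=2m$ forces $\rho^2\leq 2m$, i.e.\ $\rho\leq\sqrt{2m}$. Combining these, $\rho-\lambda_{\min}\leq 2\sqrt{2m}=\sqrt{8m}$, and substituting into the gap inequality produces $\epsilon(G)=\rho-\overline d\geq \var(G)/\sqrt{8m}$, as claimed. The only subtle bookkeeping point I would watch is that the target constant $\sqrt 8$ comes exactly from the estimate $\rho-\lambda_{\min}\leq 2\rho$ followed by $\rho\leq\sqrt{2m}$, so these two steps must be applied in this form and not replaced by a cruder bound on the denominator.
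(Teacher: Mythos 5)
Your proof is correct; the interesting part is how it relates to the paper. The paper itself does not prove this statement --- it quotes it from Nikiforov --- but its Section \ref{sec:proof} proves the strictly stronger \eqref{eq:main} by a chain that specializes to a proof of \eqref{eq:nik}: Hofmeister's bound $\rho \geq \sqrt{\tfrac{1}{n}\sum_i d_i^2}$, the difference-of-squares trick, the variance identity of Lemma \ref{lem:nik}, and a degree-sum bound on the resulting denominator ($\sum_i d_i^2 \leq 2m\Delta$ there; the cruder $\sum_i d_i^2 \leq 2mn$ already yields \eqref{eq:nik}). You work with the same test vector $\tfrac{1}{\sqrt n}\mathbf 1$ and in effect re-derive Lemma \ref{lem:nik} as a variance computation, but your pivotal estimate is genuinely different: where the paper bounds the second moment by $\rho^2$ (that is exactly Hofmeister's inequality), you apply the one-sided bound $Y^2 \leq (\rho-\lambda_{\min})Y$ to the gap variable $Y=\rho-X\geq 0$, obtaining the intermediate inequality $\epsilon(G) \geq \var(G)/(\rho-\lambda_{\min})$, and you finish with $\rho-\lambda_{\min} \leq 2\rho \leq 2\sqrt{2m}$ via $\Tr(A^2)=2m$. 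Your route buys an intermediate bound in terms of the spectral spread $\rho-\lambda_{\min}$, which is of independent interest and beats \eqref{eq:nik} whenever $|\lambda_{\min}|$ is much smaller than $\rho$; the paper's route keeps the denominator as a degree sum, which is precisely what lets the Liu--Liu estimate $\sum_i d_i^2 \leq 2m\Delta$ upgrade \eqref{eq:nik} to \eqref{eq:main} --- an upgrade your trace-based finish cannot replicate, since $\rho \leq \sqrt{2m\Delta/n}$ is false in general (take $K_k$ together with many isolated vertices). One small point of hygiene: dividing by $\rho-\lambda_{\min}$ requires $m\geq 1$ (then $\rho>0>\lambda_{\min}$ because $\Tr(A)=0$), which is harmless since for $m=0$ the right-hand side of \eqref{eq:nik} is not even defined.
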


For example, as can be easily asscertained using \eqref{eq:var}, it implies the following bound obtained by Cioab\u{a} and Gregory in \cite{CioGre07}:
\begin{cor}\cite[Corollary 3]{CioGre07}
For every graph $G$,
%\begin{equation}\label{eq:cg
$$
\epsilon(G) \geq \frac{(\Delta-\delta)^{2}}{4n\Delta}.
$$
%\end{equation}
\end{cor}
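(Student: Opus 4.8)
The plan is to obtain the corollary as a direct consequence of Nikiforov's Theorem~\ref{thm:nik}, fed with the variance estimate of \eqref{eq:var}. First I would apply \eqref{eq:nik} to write $\epsilon(G) \ge \var(G)/\sqrt{8m}$, and then invoke the left-hand inequality of \eqref{eq:var}, namely $\var(G) \ge (\Delta-\delta)^2/(2n)$, to bound the numerator from below. Chaining the two gives
\[
\epsilon(G) \ge \frac{(\Delta-\delta)^2}{2n\sqrt{8m}},
\]
whose numerator already coincides with the one in the target inequality, so the whole matter reduces to comparing the two denominators.

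It therefore remains to verify $2n\sqrt{8m} \le 4n\Delta$, that is $\sqrt{8m} \le 2\Delta$, equivalently $2m \le \Delta^2$. The natural tool here is the handshake identity $2m=\sum_v d_v$ together with $d_v\le\Delta$, and I would also dispose of the degenerate case $\Delta=\delta$ at the outset, where the right-hand side vanishes and Theorem~\ref{thm:cs} already furnishes $\epsilon(G)\ge 0$.

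The step I expect to be the main obstacle is exactly this denominator comparison. The handshake bound only yields $2m\le n\Delta$, hence $\sqrt{8m}\le 2\sqrt{n\Delta}$, which is weaker than the required $\sqrt{8m}\le 2\Delta$ as soon as $\Delta<n$; so the crude Nagy minimum of the variance appears to lose too much, and closing the gap looks to demand either a sharper count of $m$ valid near the extremal degree configuration or the retention of more of the true variance than its minimal value $(\Delta-\delta)^2/(2n)$. Pinning down this quantitative passage from $\sqrt{8m}$ to $\Delta$ is where I anticipate the real work of the argument to lie.
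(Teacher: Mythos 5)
You have correctly reconstructed the route that the paper itself gestures at (chain \eqref{eq:nik} with the lower bound in \eqref{eq:var}), and your diagnosis of where it breaks is not a technical inconvenience that more work could remove: that route cannot be completed at all, because Nikiforov's bound \eqref{eq:nik} does not dominate the Cioab\u{a}--Gregory bound pointwise. To see this, subdivide one edge of a connected cubic graph on $n-1$ vertices, so that $G$ has $n-1$ vertices of degree $\Delta=3$ and a single vertex of degree $\delta=2$. Then $\var(G)=\frac{n-1}{n^{2}}$ and $2m=3n-1$, so
\[
\frac{\var(G)}{\sqrt{8m}}=\frac{n-1}{2n^{2}\sqrt{3n-1}}\asymp\frac{1}{n^{3/2}},
\qquad\text{while}\qquad
\frac{(\Delta-\delta)^{2}}{4n\Delta}=\frac{1}{12n},
\]
and the second quantity exceeds the first by a factor of order $\sqrt{n}$ (already at $n=101$: roughly $0.00028$ versus $0.00083$). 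So no sharper count of $m$, and no further use of \eqref{eq:var}, can bridge $\sqrt{8m}$ and $2\Delta$; the implication as literally announced in the paper (that \eqref{eq:nik} yields the corollary ``using \eqref{eq:var}'') is itself inaccurate, and your stuck point is in effect a counterexample to it.

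What does prove the corollary is the stronger inequality \eqref{eq:main}, or equivalently the intermediate step of its proof in Section \ref{sec:proof}. There one has
\[
\epsilon(G)\;\ge\;\frac{\var(G)}{\sqrt{\frac{1}{n}\sum_{i=1}^{n}d_{i}^{2}}+\frac{2m}{n}},
\]
and the key move is to bound the denominator by $2\Delta$ directly from $d_{i}\le\Delta$: indeed $\frac{1}{n}\sum_{i}d_{i}^{2}\le\Delta^{2}$ and $\frac{2m}{n}\le\Delta$, whence $\epsilon(G)\ge\var(G)/(2\Delta)$. (Alternatively, start from \eqref{eq:main} and apply the handshake bound $2m\le n\Delta$ to get $\sqrt{8m\Delta}\le 2\Delta\sqrt{n}$, which gives the same conclusion; the identical substitution applied to \eqref{eq:nik} only produces $\sqrt{8m}\le 2\sqrt{n\Delta}$, which is exactly the wall you ran into.) Then the lower bound in \eqref{eq:var} finishes the argument:
\[
\epsilon(G)\;\ge\;\frac{\var(G)}{2\Delta}\;\ge\;\frac{(\Delta-\delta)^{2}}{4n\Delta}.
\]
So the missing idea is not a finer estimate of $m$ but the retention of $\Delta$, rather than $\sqrt{2m}$, in the denominator; that information is still present in \eqref{eq:main} and in the intermediate inequality above, but it is irretrievably given away once one passes to the final form \eqref{eq:nik}.
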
%\end{equation}

We shall prove, using elementary methods, the following new bound:
\begin{thm}\label{thm:main}
For every graph $G$,
\begin{equation}\label{eq:main}
\epsilon(G) \geq \frac{\var(G)\sqrt{n}}{\sqrt{8m\Delta}}.
\end{equation}
\end{thm}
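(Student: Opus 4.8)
The plan is to first recast the target inequality \eqref{eq:main} into a more transparent equivalent form, and then to reach that form from a single well-chosen Rayleigh quotient. Observe that since $2m=n\overline{d}$ we have $\sqrt{8m\Delta}=2\sqrt{n}\,\sqrt{\overline{d}\,\Delta}$, so the right-hand side of \eqref{eq:main} is exactly $\var(G)/\bigl(2\sqrt{\overline{d}\,\Delta}\bigr)$. Thus it suffices to prove
\[
\rho-\overline{d}\;\geq\;\frac{\var(G)}{2\sqrt{\overline{d}\,\Delta}}.
\]
This reformulation is what will guide the choice of estimates below, and it already suggests that the factor $\sqrt{n/\Delta}$ separating our bound from Nikiforov's \eqref{eq:nik} should come from replacing a crude $\overline{d}$-type quantity by a $\Delta$-type one.

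For the lower bound on $\rho$, I would let $\mathbf{1}$ be the all-ones vector and $\mathbf{d}$ the vector of degrees, so that $A\mathbf{1}=\mathbf{d}$. Since the eigenvalues of $A^2$ are the squares of those of $A$, its largest eigenvalue is $\rho^2$, and the Rayleigh quotient of $A^2$ against $\mathbf{1}$ gives
\[
\rho^2\;\geq\;\frac{\mathbf{1}^{\top}A^2\mathbf{1}}{\mathbf{1}^{\top}\mathbf{1}}\;=\;\frac{\|A\mathbf{1}\|^2}{n}\;=\;\frac{1}{n}\sum_{u}d_u^2\;=\;\var(G)+\overline{d}^{\,2},
\]
the last equality being the standard identity for the variance of degrees. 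Taking square roots and rationalizing the resulting difference then yields
\[
\rho-\overline{d}\;\geq\;\sqrt{\var(G)+\overline{d}^{\,2}}-\overline{d}\;=\;\frac{\var(G)}{\sqrt{\var(G)+\overline{d}^{\,2}}+\overline{d}}.
\]

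Comparing with the reformulated target, the entire problem reduces to the single estimate
\[
\sqrt{\var(G)+\overline{d}^{\,2}}+\overline{d}\;\leq\;2\sqrt{\overline{d}\,\Delta},
\]
which I expect to be the crux of the argument. I would bound the two summands separately: for the first, the elementary inequality $\sum_u d_u^2\leq\Delta\sum_u d_u=2m\Delta$ gives $\var(G)+\overline{d}^{\,2}=\tfrac{1}{n}\sum_u d_u^2\leq\overline{d}\,\Delta$, hence $\sqrt{\var(G)+\overline{d}^{\,2}}\leq\sqrt{\overline{d}\,\Delta}$; for the second, $\overline{d}\leq\Delta$ gives $\overline{d}\leq\sqrt{\overline{d}\,\Delta}$. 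Adding the two bounds closes the gap. The one subtlety to watch is the degenerate case $\var(G)=0$ (that is, $G$ regular), where the claim holds trivially with equality and the rationalization division is vacuous; in every other case each step is a genuine inequality and the division is legitimate. Putting the pieces together and unwinding the reformulation recovers \eqref{eq:main} exactly.
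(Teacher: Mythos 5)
Your proof is correct, and its skeleton matches the paper's: lower-bound $\rho$ by $\sqrt{\tfrac{1}{n}\sum_i d_i^2}$, rationalize the difference so the identity $\tfrac{1}{n}\sum_i d_i^2-\overline{d}^{\,2}=\var(G)$ puts $\var(G)$ in the numerator, then push the denominator up to $2\sqrt{\overline{d}\,\Delta}=\sqrt{8m\Delta/n}$. Where you genuinely depart from the paper is in how the two supporting facts are established, and in both places your route is more elementary. First, you derive the Hofmeister-type bound $\rho^2\geq\tfrac{1}{n}\sum_i d_i^2$ in one line from the Rayleigh quotient of $A^2$ at the all-ones vector, whereas the paper cites Hofmeister's theorem (Lemma \ref{lem:hof}) as a black box. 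Second --- and this is the more substantive simplification --- you obtain $\sum_i d_i^2\leq 2m\Delta$ from the trivial termwise bound $d_i\leq\Delta$, while the paper reaches the same inequality (Lemma \ref{lem:liuliu}) through two cited results of Liu and Liu on the spectral radius of the signless Laplacian $Q(G)$; your observation shows that detour is unnecessary machinery. The remaining difference is cosmetic: the paper bounds the denominator by first absorbing $\overline{d}$ into $\sqrt{\tfrac{1}{n}\sum_i d_i^2}$ via Cauchy--Schwarz (Lemma \ref{lem:cs}) and only then invoking the degree-sum bound, while you bound the two summands separately, using $\overline{d}\leq\Delta$ for the second; both roads end at $2\sqrt{\overline{d}\,\Delta}$, so the final constant is identical. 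You also correctly flag the degenerate regular case where the rationalization divides by a quantity that could vanish only when the claim is trivial. The net effect of your version is a fully self-contained proof of the same strength, which is a genuine (if modest) improvement in economy over the paper's presentation.
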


As $n>\Delta$, the new bound of \eqref{eq:main} is always strictly better than \eqref{eq:nik}.

\section{Subregular graphs}

There is one very special case which merits separate treatment.
\begin{defin}\cite{Nik07}
Let $G$ be a graph with $\Delta-\delta=1$. If there is either exactly one vertex of degree $\Delta$ or exactly one vertex of degree $\Delta-1$, then $G$ is called \emph{subregular}.
\end{defin}

Clearly, subregular graphs are very close to being regular. We will find it convenient to distinguish between their two varieties thus:
\begin{defin}
Let $G$ be a subregular graph. 
\begin{itemize}[leftmargin=*]
\item
If there is exactly one vertex of degree $\Delta$, $G$ is \emph{high subregular}. 
\item
If there is exactly one vertex of degree $\Delta-1$, $G$ is \emph{low subregular}.
\end{itemize}
\end{defin}

%In subregular graphs $\var(G)$ is extremely small and this circumstance causes both \eqref{eq:nik} and \eqref{eq:main} to stray into excessive pessimism. 
For subregular graphs the bounds discussed so far yield estimates which are far too pessimistic. However, there is another bound due to Cioab\u{a} and Gregory \cite{CioGre07} which performs better in this case.
%, since it depends neither on $\var(G)$ nor on $\sigma(G)$:

\begin{thm}\cite{CioGre07}\label{thm:cg1}
\begin{equation}\label{eq:cgs}
\epsilon(G) \geq \frac{1}{n(\Delta+2)}.
\end{equation}
\end{thm}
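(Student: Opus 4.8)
The plan is to prove the bound $\epsilon(G) \geq \frac{1}{n(\Delta+2)}$ for subregular graphs by using the variational (Rayleigh quotient) characterization of the spectral radius together with a carefully chosen test vector that exploits the near-regularity of $G$. The key observation is that for a subregular graph, all but one vertex share the same degree, so the graph is a tiny perturbation of a regular graph, and we want to capture precisely how much the single exceptional vertex pushes $\rho$ above $\overline{d}$.

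First I would set up the Rayleigh quotient: since $A$ is symmetric, $\rho = \max_{x \neq 0} \frac{x^{T}Ax}{x^{T}x}$, so any unit test vector $x$ gives $\rho \geq x^{T}Ax$. I would take $x$ to be a small perturbation of the all-ones vector $\mathbf{1}$ (normalized), since $\mathbf{1}$ is the natural near-eigenvector for an almost-regular graph. Specifically, I would write $x = \frac{1}{\sqrt{n}}\mathbf{1} + \epsilon v$ for a suitable correction vector $v$ orthogonal to $\mathbf{1}$, chosen to increase weight on (or near) the exceptional vertex. Then $x^{T}Ax = \overline{d} + \text{(first-order term)} + \text{(second-order term)}$, and because $\mathbf{1}$ is not an eigenvector (as $G$ is irregular) the linear term in $\epsilon$ will be nonzero, yielding a genuine positive gap. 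Optimizing over the scale $\epsilon$ of the perturbation should produce a lower bound of the claimed order.

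The main obstacle will be choosing the perturbation vector $v$ so that the resulting estimate is clean and produces exactly the denominator $n(\Delta+2)$ rather than some weaker quantity. A promising concrete choice is to concentrate the perturbation on the single exceptional vertex and its neighborhood: for instance, in the high subregular case let $u$ be the unique vertex of degree $\Delta$ and set the test vector to be $\mathbf{1}$ plus a small bump at $u$. Computing $x^{T}Ax / x^{T}x$ for $x = \mathbf{1} + t e_{u}$ gives a ratio of the form $\frac{2m + 2t d_{u} + 0 \cdot t^{2}}{n + 2t + t^{2}}$, since the diagonal of $A$ vanishes; maximizing this scalar function of $t$ and subtracting $\overline{d}$ is then a one-variable calculus problem whose optimum I expect to be governed by the discrepancy $d_{u} - \overline{d}$ and the size of the denominator $n + O(t)$.

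The delicate step is verifying that this optimization genuinely yields $\frac{1}{n(\Delta+2)}$ and not merely a constant multiple of it, and handling the low subregular case symmetrically (there one concentrates a negative bump on the unique vertex of degree $\Delta-1$, or equivalently works with the complement structure on the degree sequence). I would therefore carry out the two cases in parallel, showing in each that the optimal $t$ is small and that the leading behavior of $\max_{t} \frac{x^{T}Ax}{x^{T}x} - \overline{d}$ matches the target. I anticipate that the cleanest route is to bound the denominator $x^{T}x$ from above by something like $n + 2$ (using $|t| \leq 1$) while keeping the numerator gap exact, so that the reciprocal structure $\frac{1}{n(\Delta+2)}$ emerges naturally from the interplay between the degree discrepancy and the normalization.
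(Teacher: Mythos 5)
A preliminary remark: the paper offers no proof of this statement to compare against --- Theorem \ref{thm:cg1} is quoted verbatim from Cioab\u{a} and Gregory \cite{CioGre07}, who prove it for every \emph{connected irregular} graph by analyzing the principal eigenvector. Judged on its own merits, your proposal has two genuine gaps. The first is scope: you set out to prove the bound ``for subregular graphs,'' but the theorem is not restricted to subregular graphs --- irregularity and connectivity are its only hypotheses, and that generality is the point of citing it. This is not repairable within your framework, because your argument is pinned to a single exceptional vertex whose degree deviates from $\overline{d}$ by nearly $1$, and no such vertex need exist. Concretely, take any connected graph with $n/2$ vertices of degree $D$ and $n/2$ of degree $D+1$ (for $D=2$, $K_4$ minus an edge is such a graph). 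Then $\overline{d}=D+\tfrac12$, $\Delta=D+1$, and every vertex has $|d_u-\overline{d}|=\tfrac12$. Your own computation for $x=\mathbf{1}+te_u$ gives, for every $t$,
\begin{equation*}
\frac{x^{T}Ax}{x^{T}x}-\overline{d}
=\frac{2tn\bigl(d_u-\overline{d}\bigr)-2mt^{2}}{n\,(n+2t+t^{2})}
\leq\frac{\max_{t\geq 0}\bigl(tn-2mt^{2}\bigr)}{n^{2}}
=\frac{1}{8m}
=\frac{1}{n(4D+2)},
\end{equation*}
which is strictly smaller than $\frac{1}{n(\Delta+2)}=\frac{1}{n(D+3)}$ for every $D\geq 1$; a negative bump at a degree-$D$ vertex does no better. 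So on exactly the graphs where the theorem has real content, the single-bump Rayleigh quotient cannot certify the stated constant, no matter how $t$ is optimized.

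The second gap is that even in the subregular case, the ``delicate step'' you explicitly defer is where the argument, finished the way you propose, fails. For a high subregular graph one has $2m=n(\Delta-1)+1$ and $d_u-\overline{d}=\frac{n-1}{n}$, so the exact maximum of your numerator is $\frac{(n-1)^{2}}{2m}$, attained at $t^{*}=\frac{n-1}{2m}$. Your plan is then to bound the denominator crudely by $n(n+2)$ (in fact $|t|\leq 1$ only gives $x^{T}x\leq n+3$, but grant $n+2$). The resulting claim $(n-1)^{2}(\Delta+2)\geq\bigl(n(\Delta-1)+1\bigr)(n+2)$ is false: for $n=11$, $\Delta=8$ (one vertex of degree $8$, ten of degree $7$) the left side is $1000$ and the right side is $1014$. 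The subregular case can be rescued, but only by carrying the optimal $t^{*}\approx\frac{1}{\Delta-1}$ into the denominator $n(n+2t^{*}+t^{*2})$ and verifying the resulting two-variable polynomial inequality --- which is precisely the computation your proposal postpones with ``I expect'' and ``I anticipate.'' In short, the set-up (a rank-one bump at the exceptional vertex) is sound and does produce nontrivial lower bounds, but as written the proposal neither covers the theorem's actual generality nor completes the special case it does address.
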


We will prove:
\begin{thm}\label{thm:subreg}
Let $G$ be a connected subregular graph on $n \geq 7$ vertices and with maximum degree $\Delta$. Then:
\begin{itemize}
\item
If $G$ is high subregular, then:
\begin{equation}\label{eq:subh}
\epsilon(G) \geq \frac{n^{2}-2n+3}{n^{3}\Delta}.
\end{equation}
\item
If $G$ is low subregular, then:
\begin{equation}\label{eq:subl}
\epsilon(G) \geq \frac{2n^{2}-4n-3}{2n^{3}(\Delta-1+\frac{1}{\Delta})}.
\end{equation}
\end{itemize}
\end{thm}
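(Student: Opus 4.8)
The plan is to obtain a sharp lower bound on $\rho$ that exploits the fact that a subregular graph departs from regularity at a single vertex. Throughout, let $v$ denote the unique special vertex: in the high subregular case $d_v = \Delta$ and every other vertex (in particular every neighbor of $v$) has degree $\Delta - 1$, whence $2m = n(\Delta - 1) + 1$ and $\overline{d} = \Delta - 1 + \tfrac1n$; in the low subregular case $d_v = \Delta - 1$ and every other vertex has degree $\Delta$, whence $2m = n\Delta - 1$ and $\overline{d} = \Delta - \tfrac1n$. In both cases $\var(G) = \tfrac{n-1}{n^2}$. These identities are immediate from the degree sequence.

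The main tool is a Rayleigh quotient applied to a test vector that perturbs the all-ones vector at $v$ only. Since $A^2$ is positive semidefinite with largest eigenvalue $\rho^2$, for every $x$ we have $\rho^2 \ge \|Ax\|^2/\|x\|^2$. I take $x = \mathbf{1} + t\, e_v$. Because $A\mathbf{1} = d$ and $A e_v$ is the indicator vector of the neighborhood of $v$, the quantities $\|x\|^2 = n + 2t + t^2$ and $\|Ax\|^2 = \|d\|^2 + 2t\sum_{j \sim v} d_j + t^2 d_v$ are explicit: all their coefficients ($\|d\|^2$, the neighbor-degree sum $\sum_{j\sim v} d_j$, and $d_v$) are pinned down by subregularity. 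This yields $\rho^2 \ge Q(t)$ for an explicit rational function $Q$. Passing to $A^2$ rather than $A$ is the crucial point: $\|Ax\|^2$ already registers the degrees of the neighbors of $v$, whereas the first-order quotient $x^{\mathsf T} A x / x^{\mathsf T}x$ does not and turns out to be too weak in the low subregular case (the first-order bound does, however, already suffice in the high subregular case).

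To extract $\epsilon$, I factor $Q(t) - \overline{d}^{\,2} \le \rho^2 - \overline{d}^{\,2} = \epsilon\,(\rho + \overline{d})$ and bound $\rho + \overline{d}$ from above by an explicit quantity (using $\rho \le \Delta$, or a marginally sharper estimate), obtaining $\epsilon \ge (Q(t) - \overline{d}^{\,2})/(\rho + \overline{d})$ as an explicit function of $t$. Optimizing over $t$ and simplifying then produces the two stated forms; the different shapes of \eqref{eq:subh} and \eqref{eq:subl} --- the denominators $n^3\Delta$ versus $2n^3(\Delta - 1 + \tfrac1\Delta)$ --- arise simply because the role of $d_v$ and of the common neighbor degree are interchanged between the two cases.

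The hard part is the last step. The target bounds lie numerically very close to the exact optimum of this method, so there is almost no slack: both the choice of $t$ and the upper estimate for $\rho + \overline{d}$ must be controlled tightly, and it is precisely in verifying the resulting elementary polynomial inequalities that the hypothesis $n \ge 7$ is used. I would carry out the high and low cases separately, since the signs of the perturbation and of the lower-order terms differ.
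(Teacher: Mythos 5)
Your overall machinery is in fact the paper's machinery in disguise: the Yu--Lu--Tian bound the paper uses (Lemma \ref{lem:yulu}) is precisely the Rayleigh-quotient inequality $\rho^2 \ge \|Ad\|^2/\|d\|^2$, and for a subregular graph the degree vector $d$ is proportional to $\mathbf{1}+te_v$ (with $t=\tfrac{1}{\Delta-1}$ or $t=-\tfrac{1}{\Delta}$), so your one-parameter family contains the paper's test vector; the factorization $\epsilon=(\rho^2-\overline{d}^{\,2})/(\rho+\overline{d})$ and the two denominator estimates are also the paper's. The genuine gap is your pairing of cases with bounds. You attach \eqref{eq:subl} to the case where the special vertex has degree $\Delta-1$ and the other $n-1$ vertices have degree $\Delta$. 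In that case $\overline{d}=\Delta-\tfrac1n$, so by Theorem \ref{thm:cs} we get $\rho+\overline{d}\ge 2\Delta-\tfrac2n > 2\bigl(\Delta-1+\tfrac1\Delta\bigr)$ for all $n\ge 3$, $\Delta\ge 2$: the denominator estimate needed for \eqref{eq:subl} is \emph{false} there, and no valid sharper one exists. Nor can the numerator compensate: for fixed $\Delta$ and $n\to\infty$ one computes $\sup_t\bigl(Q(t)-\overline{d}^{\,2}\bigr)=\tfrac{2\Delta^2-\Delta+1}{n(\Delta^2-\Delta+1)}+O(n^{-2})$, whereas certifying \eqref{eq:subl} against any valid denominator (necessarily at least $2\overline{d}$) requires at least $\tfrac{2\Delta^2}{n(\Delta^2-\Delta+1)}+O(n^{-2})$. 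So your ``low'' case cannot be completed by this method; what the method actually proves in that case is \eqref{eq:subh}, and conversely your ``high'' case (special vertex of degree $\Delta$) is the one that yields \eqref{eq:subl}. This swap is exactly what the paper's own proof does: its Definition of high/low and its proof of Theorem \ref{thm:subreg} use the labels in opposite senses (parity confirms the proof's convention: the Figure \ref{fig:s1} graph, called high subregular and measured against \eqref{eq:subh}, must have a unique vertex of degree $\Delta-1=2$). You inherited the paper's misworded definition, but as written one of your two bullets asserts an inequality your argument cannot deliver.

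Beyond that, the proposal defers precisely the load-bearing steps. First, the ``marginally sharper estimate'' for $\rho$ is essential, not marginal: in the case with a unique vertex of degree $\Delta$, take $K_7$ minus a perfect matching on six vertices ($n=7$, $\Delta=6$); there $\rho=2+\sqrt{10}$, and even inserting the exact $\rho^2$ into the numerator, $\tfrac{\rho^2-\overline{d}^{\,2}}{2\Delta}\approx 0.0167 < 0.0185 \approx \tfrac{n^2-2n+3}{n^3\Delta}$, so the crude bound $\rho\le\Delta$ loses the theorem. What is needed is $\rho\le\Delta-1+\tfrac1\Delta$, i.e.\ Corollary \ref{cor:hsf}, which rests on the nontrivial Hong--Shu--Fang inequality (Lemma \ref{lem:hsf}); easy estimates such as $\rho\le\max_i t_i/d_i$ give only $\Delta-1+\tfrac{1}{\Delta-1}$ and still fall short. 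Second, nothing in your outline explains how $\Delta$ disappears from the numerators $n^2-2n+3$ and $2n^2-4n-3$: after optimizing over $t$ you still hold a function of both $n$ and $\Delta$, and the paper converts it into a function of $n$ alone by showing its quantity $L(n,\Delta)$ is non-increasing in $\Delta$ and substituting $\Delta\le n-2$ (Lemma \ref{lem:high}), respectively $\Delta\le n-1$. Optimizing $t$ does not do this for you, and it is in that monotonicity argument and the final polynomial comparisons --- not merely in ``elementary polynomial inequalities'' left to the reader --- that the actual work, and the hypothesis $n\ge 7$, reside.
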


%It is a nice exercise to verify that \eqref{eq:sub} is better than \eqref{eq:cgs}. 

\begin{expl}
Consider the high subregular graph $G$ depicted in Figure \ref{fig:s1}. We have the following lower bounds for $\epsilon(G)$:

\begin{figure}
\includegraphics[height=6cm,width=8cm]{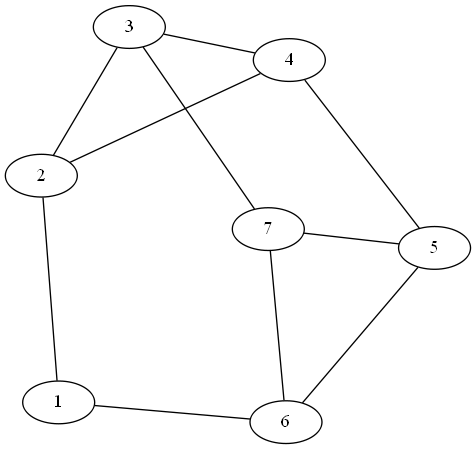}
\caption{A subregular graph, $n=7,\Delta=3$}\label{fig:s1}
\end{figure}

\medskip

\begin{tabular}{|l|llll|}

\hline

$\epsilon(G)$ \ & \eqref{eq:nik} \ & \eqref{eq:main} \ & \eqref{eq:cgs} \ & \eqref{eq:subh} \\ [0.5ex] 

\hline

 0.0461 &  0.0137 & 0.0209 & 0.0286 & 0.0364\\

\hline
\end{tabular}

\end{expl}

\section{Proof of Therem \ref{thm:main}}\label{sec:proof}
%Denote the vertex degrees of $G$ by $d_{1},\ldots,d_{n}$. We

We begin by collecting a number of lemmae.
%We begin by recalling Hofmeister's \cite{Hof88} well-known bound for $\rho$:
\begin{lem}[Hofmeister \cite{Hof88}]\label{lem:hof}
$$
\rho \geq \sqrt{\frac{1}{n}\sum_{i=1}^{n}{d_{i}^{2}}}.
$$
\end{lem}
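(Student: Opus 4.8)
The plan is to obtain the bound as a Rayleigh quotient estimate, but applied to $A^{2}$ rather than to $A$ itself. The key observation is that since $A$ is a real symmetric matrix its eigenvalues $\lambda_{1},\ldots,\lambda_{n}$ are real, and the eigenvalues of $A^{2}$ are exactly $\lambda_{1}^{2},\ldots,\lambda_{n}^{2}$. Hence the largest eigenvalue of the positive semidefinite matrix $A^{2}$ is $\max_{i}\lambda_{i}^{2}=(\max_{i}|\lambda_{i}|)^{2}=\rho^{2}$. This identity $\lambda_{\max}(A^{2})=\rho^{2}$ is what lets me convert a lower bound on a quadratic form of $A^{2}$ into a lower bound on $\rho$, and it requires only the symmetry of $A$ together with the definition of $\rho$ as the spectral radius; no appeal to Perron--Frobenius is needed.

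Next I would invoke the Rayleigh principle for the symmetric matrix $A^{2}$, which says that for every nonzero vector $x$,
$$
\rho^{2}=\lambda_{\max}(A^{2}) \geq \frac{x^{\top}A^{2}x}{x^{\top}x}.
$$
The whole argument then hinges on choosing the right test vector. The natural and, as it turns out, exactly-suited choice is the all-ones vector $x=\mathbf{1}$. Its virtue is that $A\mathbf{1}$ is precisely the degree vector $d=(d_{1},\ldots,d_{n})^{\top}$, since the $i$-th entry of $A\mathbf{1}$ is $\sum_{j}A_{ij}=d_{i}$.

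With this choice the two quadratic forms are immediate to evaluate. Using the symmetry of $A$ I can write $\mathbf{1}^{\top}A^{2}\mathbf{1}=(A\mathbf{1})^{\top}(A\mathbf{1})=\|A\mathbf{1}\|^{2}=\sum_{i=1}^{n}d_{i}^{2}$, while the denominator is simply $\mathbf{1}^{\top}\mathbf{1}=n$. Substituting into the Rayleigh estimate yields $\rho^{2}\geq\frac{1}{n}\sum_{i=1}^{n}d_{i}^{2}$, and taking square roots gives the claimed inequality.

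I do not anticipate a genuine obstacle here: the proof is a short computation once the setup is in place. The one point that deserves care, and which I would flag as the conceptual crux, is the step $\lambda_{\max}(A^{2})=\rho^{2}$. A reader might instinctively try to apply the Rayleigh quotient to $A$ directly, perhaps with $d$ itself as test vector; but that route produces the quotient $\frac{2\sum_{ij\in E}d_{i}d_{j}}{\sum_{i}d_{i}^{2}}$, which is not in the desired form. Passing to $A^{2}$ and testing against $\mathbf{1}$ is exactly the maneuver that makes the degrees appear with the correct exponent and the factor $1/n$ emerge cleanly.
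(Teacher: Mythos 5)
Your proof is correct: passing to $A^{2}$, using $\lambda_{\max}(A^{2})=\rho^{2}$ (valid for any real symmetric matrix, no Perron--Frobenius needed), and testing the Rayleigh quotient against $\mathbf{1}$ indeed gives $\rho^{2}\geq\frac{1}{n}\sum_{i=1}^{n}d_{i}^{2}$. Note that the paper itself offers no proof to compare against --- it simply cites Hofmeister \cite{Hof88} --- and your argument is the standard short derivation of that bound (equivalently phrased: $\rho=\|A\|_{2}\geq\|A\mathbf{1}\|/\|\mathbf{1}\|$), so you have supplied a clean, self-contained justification where the paper relies on a reference.
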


\begin{lem}\label{lem:cs}
$$
\sqrt{\frac{1}{n}\sum_{i=1}^{n}{d_{i}^{2}}} \geq \frac{2m}{n}.
$$
\end{lem}
\begin{proof}
Cauchy-Shwarz.
\end{proof}

\begin{lem}\cite[p. 352]{Nik06}\label{lem:nik}
$$
\frac{1}{n}\sum_{i=1}^{n}{d_{i}^{2}}-\Big(\frac{2m}{n}\Big)^{2}=\var(G).
$$
\end{lem}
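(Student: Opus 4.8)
The plan is to recognize that this lemma is just the familiar identity that the variance of a list of numbers equals the mean of the squares minus the square of the mean, applied here to the degree sequence $d_1,\dots,d_n$. So I would start from the definition of $\var(G)$ given in the introduction, namely $\var(G)=\frac{1}{n}\sum_{u\in V(G)}\big(d_u-\frac{2m}{n}\big)^2$, and simply expand the square inside the sum.

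Concretely, I would write
\begin{equation}\label{eq:expand}
\var(G)=\frac{1}{n}\sum_{u}\Big(d_u^2-2\cdot\frac{2m}{n}\,d_u+\Big(\frac{2m}{n}\Big)^2\Big)=\frac{1}{n}\sum_{u}d_u^2-\frac{4m}{n^2}\sum_{u}d_u+\Big(\frac{2m}{n}\Big)^2,
\end{equation}
where in the last term I have used that the constant $(2m/n)^2$ is summed $n$ times and then divided by $n$. The single external fact needed is the handshake lemma $\sum_{u}d_u=2m$, which collapses the middle term to $\frac{4m}{n^2}\cdot 2m=\frac{8m^2}{n^2}=2\big(\frac{2m}{n}\big)^2$.

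Substituting this into \eqref{eq:expand} gives $\var(G)=\frac{1}{n}\sum_u d_u^2-2\big(\frac{2m}{n}\big)^2+\big(\frac{2m}{n}\big)^2=\frac{1}{n}\sum_u d_u^2-\big(\frac{2m}{n}\big)^2$, which is exactly the claimed equality after transposing. There is no real obstacle here: the statement is a routine algebraic identity, and the only thing to be careful about is correctly tracking the constant term and invoking $\sum_u d_u=2m$. I would therefore present the proof as a single displayed computation rather than a multi-step argument.
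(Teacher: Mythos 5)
Your proof is correct: expanding the square in the definition of $\var(G)$ and applying the handshake lemma $\sum_u d_u = 2m$ gives exactly the claimed identity, and the algebra checks out. The paper itself offers no proof at all --- it simply cites this as a known fact from Nikiforov \cite[p.\ 352]{Nik06} --- so your computation is precisely the routine argument the citation stands in for, and there is nothing to add or compare.
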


Let $A(G)$ and $D(G)$ be the adjacency matrix and the diagonal matrix of vertex degrees, respectively, of $G$. Then $Q(G)=A(G)+D(G)$ is called the \emph{signless Laplacian matrix}. The following claim is stated by Liu and Liu \cite{LiuLiu09zagr} only for connected graphs but in fact their proof does not use the connectedness assumption.

\begin{lem}\cite[Theorem 2.1]{LiuLiu09zagr}%\label{lem:liuliu}
Let $G$ be a graph. If $\rho$ is the spectral radius of $Q(G)$, then 
$$\sum_{i=1}^{n}{d_{i}^{2}} \leq m\rho.$$
\end{lem}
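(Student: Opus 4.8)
The plan is to exploit that $Q=Q(G)=A(G)+D(G)$ is symmetric and positive semidefinite (indeed $x^{T}Qx=\sum_{ij\in E(G)}(x_{i}+x_{j})^{2}\geq 0$), so that $\rho$ is its largest eigenvalue and all its eigenvalues are nonnegative. We may assume $m\geq 1$, the edgeless case being trivial. The heart of the argument is the elementary inequality
$$
\rho \geq \frac{\mathbf{1}^{T}Q^{2}\mathbf{1}}{\mathbf{1}^{T}Q\mathbf{1}},
$$
where $\mathbf{1}$ is the all-ones vector, which I would establish first. Then I would evaluate the two quadratic forms on the right and show that the fraction equals exactly $\frac{\sum_{i}d_{i}^{2}}{m}$, yielding the claim.

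For the inequality, I would first note $\mathbf{1}^{T}Q\mathbf{1}>0$ (it equals $4m>0$, computed below), so the right-hand side is well defined. One clean way to see it is to take $u=Q^{1/2}\mathbf{1}$ (the positive semidefinite square root exists) and apply the Rayleigh principle $\rho\geq\frac{u^{T}Qu}{u^{T}u}$: here $u^{T}u=\mathbf{1}^{T}Q\mathbf{1}$ and $u^{T}Qu=\mathbf{1}^{T}Q^{2}\mathbf{1}$. Alternatively, expanding $\mathbf{1}$ in an orthonormal eigenbasis $v_{1},\dots,v_{n}$ of $Q$ with eigenvalues $\mu_{1},\dots,\mu_{n}\geq 0$ and setting $c_{k}=\langle\mathbf{1},v_{k}\rangle$, the ratio becomes $\frac{\sum_{k}\mu_{k}^{2}c_{k}^{2}}{\sum_{k}\mu_{k}c_{k}^{2}}$, a weighted average of the $\mu_{k}$ with nonnegative weights $\mu_{k}c_{k}^{2}$, hence at most $\max_{k}\mu_{k}=\rho$.

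The remaining step is a direct computation of $Q\mathbf{1}$. The $i$-th entry of $Q\mathbf{1}$ is $\sum_{j}Q_{ij}=Q_{ii}+\sum_{j\neq i}Q_{ij}=d_{i}+d_{i}=2d_{i}$, so $Q\mathbf{1}=2d$, where $d=(d_{1},\dots,d_{n})^{T}$. Consequently $\mathbf{1}^{T}Q\mathbf{1}=\mathbf{1}^{T}(2d)=2\sum_{i}d_{i}=4m$ and $\mathbf{1}^{T}Q^{2}\mathbf{1}=\|Q\mathbf{1}\|^{2}=4\sum_{i}d_{i}^{2}$. Substituting into the displayed inequality gives $\rho\geq\frac{4\sum_{i}d_{i}^{2}}{4m}=\frac{\sum_{i}d_{i}^{2}}{m}$, which rearranges to $\sum_{i}d_{i}^{2}\leq m\rho$.

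I do not anticipate a serious obstacle here: the only real trick is choosing the test vector $Q^{1/2}\mathbf{1}$ (equivalently, recognizing the right ratio of quadratic forms), after which everything reduces to the two routine evaluations $Q\mathbf{1}=2d$ and $\mathbf{1}^{T}Q\mathbf{1}=4m$. The one point deserving a word of care is the positive semidefiniteness of $Q$, which guarantees both the existence of $Q^{1/2}$ and the nonnegativity of the weights in the averaging argument; without it the weighted-average interpretation would fail.
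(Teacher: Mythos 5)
Your proof is correct, but note that it cannot be compared step-by-step with the paper's, because the paper does not prove this lemma at all: it is quoted from Liu and Liu (their Theorem 2.1), with only the remark that their proof never actually uses the connectedness hypothesis under which they state it. Your argument is therefore a self-contained replacement for the citation, and a sound one: positive semidefiniteness of $Q$ via $x^{T}Qx=\sum_{ij\in E(G)}(x_{i}+x_{j})^{2}\geq 0$, the inequality $\rho\geq \mathbf{1}^{T}Q^{2}\mathbf{1}/\mathbf{1}^{T}Q\mathbf{1}$ (either by Rayleigh with the test vector $Q^{1/2}\mathbf{1}$ or by the weighted-average computation in an eigenbasis, both of which genuinely need $\mu_{k}\geq 0$, as you point out), and the evaluations $Q\mathbf{1}=2d$, $\mathbf{1}^{T}Q\mathbf{1}=4m$, $\mathbf{1}^{T}Q^{2}\mathbf{1}=\|Q\mathbf{1}\|^{2}=4\sum_{i}d_{i}^{2}$ all check out, with the $m=0$ case correctly set aside and symmetry of $Q$ justifying the norm identity. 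Importantly, your proof uses neither connectedness nor Perron--Frobenius, which concretely substantiates the paper's claim that the lemma holds for all graphs, not just connected ones. As an aside, an equivalent and slightly shorter route is to factor $Q=RR^{T}$ with $R$ the $n\times m$ vertex--edge incidence matrix: $R^{T}R$ has the same largest eigenvalue as $Q$, and applying the Rayleigh quotient to $R^{T}R$ with the all-ones vector of length $m$ gives $\rho\geq \|R\mathbf{1}_{m}\|^{2}/m=\|d\|^{2}/m$ in one line, with positive semidefiniteness automatic from the factorization; this is essentially your argument pushed to the edge space.
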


\begin{lem}\cite[Lemma 2.4]{LiuLiu09zagr}%\label{lem:liuliu}
Let $G$ be a graph. If $\rho$ is the spectral radius of $Q(G)$, then 
$$\rho \leq 2\Delta.$$
\end{lem}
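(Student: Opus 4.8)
The plan is to use the symmetry and nonnegativity of the signless Laplacian $Q(G)=A(G)+D(G)$ together with a single quadratic-form identity. The cornerstone is the well-known identity
$$
x^{\top}Q(G)\,x=\sum_{uv\in E(G)}(x_{u}+x_{v})^{2},\qquad x\in\mathbb{R}^{n},
$$
which one verifies by expanding the square: the terms $x_{u}^{2}+x_{v}^{2}$ summed over edges reproduce $x^{\top}D(G)x=\sum_{i}d_{i}x_{i}^{2}$, while the cross terms $2x_{u}x_{v}$ reproduce $x^{\top}A(G)x$. Since the right-hand side is manifestly nonnegative, $Q(G)$ is positive semidefinite; hence all its eigenvalues are real and nonnegative, and its spectral radius $\rho$ equals its largest eigenvalue. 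This disposes at once of the only bookkeeping subtlety, namely that for a general matrix the spectral radius is a maximum of moduli rather than of signed eigenvalues.

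With $\rho=\max_{x\neq 0}\,x^{\top}Q(G)x/x^{\top}x$ in hand, I would bound the Rayleigh quotient. Applying the elementary inequality $(x_{u}+x_{v})^{2}\leq 2(x_{u}^{2}+x_{v}^{2})$ edge by edge gives
$$
x^{\top}Q(G)\,x\leq 2\sum_{uv\in E(G)}(x_{u}^{2}+x_{v}^{2})=2\sum_{i=1}^{n}d_{i}x_{i}^{2}\leq 2\Delta\sum_{i=1}^{n}x_{i}^{2}=2\Delta\,x^{\top}x,
$$
and dividing by $x^{\top}x$ and maximizing yields $\rho\leq 2\Delta$, as required.

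A shorter alternative, which avoids the identity altogether, is to argue directly with a Perron eigenvector: taking a nonnegative eigenvector $x$ for $\rho$ and an index $k$ with $x_{k}=\max_{i}x_{i}$, the $k$-th coordinate of $Q(G)x=\rho x$ reads $\rho x_{k}=d_{k}x_{k}+\sum_{j\sim k}x_{j}\leq 2d_{k}x_{k}\leq 2\Delta x_{k}$, and division by $x_{k}>0$ finishes the proof; this is just the statement that the spectral radius of a nonnegative matrix is at most its largest row sum, the row sums of $Q(G)$ being exactly the $2d_{i}$. Either way the reasoning is completely elementary, so I do not expect any real obstacle beyond the positive-semidefiniteness observation already made.
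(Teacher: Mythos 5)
Your proposal is correct, and in fact it supplies something the paper does not contain: the paper states this lemma purely by citation to Liu and Liu, with no proof at all, so there is no in-paper argument to compare against. Both of your routes are valid and completely elementary. The quadratic-form route is sound: the identity $x^{\top}Q(G)x=\sum_{uv\in E(G)}(x_u+x_v)^2$ is verified exactly as you say, positive semidefiniteness makes the spectral radius equal to the largest eigenvalue (settling the modulus-versus-maximum issue), and the edgewise bound $(x_u+x_v)^2\leq 2(x_u^2+x_v^2)$ gives $x^{\top}Q(G)x\leq 2\sum_i d_i x_i^2\leq 2\Delta\, x^{\top}x$. Your second route is the standard textbook argument (spectral radius of a nonnegative matrix is at most its maximum row sum, and the row sums of $Q(G)$ are $2d_i$), and is essentially what underlies the cited Lemma 2.4 of Liu and Liu. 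One point worth making explicit: the paper applies this lemma to graphs that need not be connected (it even remarks on this for the companion Theorem 2.1 of Liu--Liu), and both of your arguments are indifferent to connectedness --- the Rayleigh-quotient proof needs nothing, and the row-sum bound for nonnegative matrices requires no irreducibility; only your phrasing of the Perron-eigenvector variant should be read with the understanding that a nonnegative eigenvector for the spectral radius exists for reducible nonnegative matrices as well (or, since $Q(G)$ is symmetric, one can simply take the eigenvector for the largest eigenvalue, which equals $\rho$ by positive semidefiniteness). So your proof closes a gap in self-containedness rather than duplicating anything in the paper.
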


We can now easily deduce:

\begin{lem}\label{lem:liuliu}
Let $G$ be a graph. Then 
$$\sum_{i=1}^{n}{d_{i}^{2}} \leq 2m\Delta.$$
\end{lem}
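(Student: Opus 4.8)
The plan is simply to chain together the two immediately preceding lemmas quoted from Liu and Liu. Writing $\rho$ for the spectral radius of the signless Laplacian $Q(G)$, the first of these gives $\sum_{i=1}^{n}d_{i}^{2} \le m\rho$, and the second gives $\rho \le 2\Delta$. Substituting the latter bound into the former yields $\sum_{i=1}^{n}d_{i}^{2} \le m\rho \le 2m\Delta$, which is exactly the claimed inequality. There is essentially no obstacle here: it is a one-line transitivity argument, which is precisely why the text announces that the result ``can now easily be deduced.'' The only point requiring a moment's care is the notational clash, since $\rho$ has elsewhere denoted the \emph{adjacency} spectral radius; but as $\rho$ does not appear in the statement of the lemma itself, this causes no trouble.

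I would also point out that the inequality admits an entirely self-contained proof bypassing the signless Laplacian altogether, and I would likely prefer to record it this way. Since every degree satisfies $d_{i} \le \Delta$, we have $d_{i}^{2} = d_{i}\cdot d_{i} \le \Delta d_{i}$ for each $i$; summing over $i$ and invoking the handshake identity $\sum_{i=1}^{n} d_{i} = 2m$ gives
$$
\sum_{i=1}^{n} d_{i}^{2} \;\le\; \Delta \sum_{i=1}^{n} d_{i} \;=\; 2m\Delta.
$$
This route needs no external citations at all. The Liu--Liu formulation does, however, have the virtue of exposing where the slack lives: equality in the chain forces $\rho = 2\Delta$ for $Q(G)$, which (for connected graphs) occurs exactly in the regular case, matching the expectation that the bound is tight precisely when $G$ is regular. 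Either way, this is a routine auxiliary estimate whose purpose is to feed into the proof of Theorem \ref{thm:main} via Lemmas \ref{lem:hof} and \ref{lem:nik}, so I would keep the argument as short as possible.
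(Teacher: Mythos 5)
Your first paragraph is precisely the paper's own proof: the lemma is placed immediately after the two quoted Liu--Liu results exactly so that it follows by the one-line chain $\sum_{i=1}^{n} d_{i}^{2} \le m\rho \le 2m\Delta$, where $\rho$ is the spectral radius of $Q(G)$, and the paper offers nothing beyond that. Your alternative argument is also correct and is genuinely different: bounding $d_{i}^{2} \le \Delta d_{i}$ termwise and summing via $\sum_{i} d_{i} = 2m$ needs no spectral machinery and no citations, and it even makes the equality case transparent (equality forces $d_{i} \in \{0,\Delta\}$ for every $i$, i.e.\ regularity up to isolated vertices). In a paper whose stated aim is to use elementary methods, your self-contained route is arguably preferable to importing two external lemmas about the signless Laplacian; what the Liu--Liu route buys in exchange is only the structural remark you make about where the slack sits, namely that tightness is governed by $\rho(Q(G)) = 2\Delta$, which for connected graphs characterizes regularity. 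Either version is a complete and correct proof.
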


\begin{proof}[Proof of Theorem \ref{thm:main}]
First of all, in light of Lemma \ref{lem:hof} and \ref{lem:cs} we have: 
$$
\epsilon(G)=\rho(G)-\frac{2m}{n}\geq\sqrt{\frac{1}{n}\sum_{i=1}^{n}{d_{i}^{2}}} - \frac{2m}{n}=\frac{\frac{1}{n}\sum_{i=1}^{n}{d_{i}^{2}}-(\frac{2m}{n})^{2}}{\sqrt{\frac{1}{n}\sum_{i=1}^{n}{d_{i}^{2}}}+\frac{2m}{n}}.
$$
Now apply Lemma \ref{lem:nik} and then Lemma \ref{lem:cs} once more:
$$
\epsilon(G) \geq \frac{\frac{1}{n}\sum_{i=1}^{n}{d_{i}^{2}}-(\frac{2m}{n})^{2}}{\sqrt{\frac{1}{n}\sum_{i=1}^{n}{d_{i}^{2}}}+\frac{2m}{n}} = \frac{\var(G)}{\sqrt{\frac{1}{n}\sum_{i=1}^{n}{d_{i}^{2}}}+\frac{2m}{n}} \geq \frac{\var(G)}{2\sqrt{\frac{1}{n}\sum_{i=1}^{n}{d_{i}^{2}}}}.
$$
Finally, use Lemma \ref{lem:liuliu}:
$$
\epsilon(G) \geq \frac{\var(G)}{2\sqrt{\frac{1}{n}\sum_{i=1}^{n}{d_{i}^{2}}}} \geq \frac{\var(G)}{2\sqrt{\frac{2m\Delta}{n}}}=\frac{\var(G)\sqrt{n}}{\sqrt{8m\Delta}}.
$$

\end{proof}

\section{Proof of Theorem \ref{thm:subreg}}\label{sec:sub}

Our approach will be similar to that taken in the proof of Theorem \ref{thm:main} but instead of Hofmeister's bound for $\rho$ we shall need a more powerful one, due to Yu, Lu, and Tian \cite{YuLuTian04}. To state it, we define the \emph{$2$-degree} $t_{i}$ of the vertex $v_{i}$ as the sum of the degress of the vertices adjacent to $v_{i}$. That is:
$$
t_{i}=\sum_{j \sim i}{d_{j}}.
$$

\begin{lem}\cite{YuLuTian04}\label{lem:yulu}
Let $G$ be a connected graph. Then,
$$
\rho \geq \sqrt{\frac{\sum_{i=1}^{n}{t_{i}^{2}}}{\sum_{i=1}^{n}{d_{i}^{2}}}} \geq \overline{d}.
$$\end{lem}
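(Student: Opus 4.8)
The plan is to recognize that the awkward-looking ratio $\sum_i t_i^2 / \sum_i d_i^2$ is secretly a Rayleigh quotient for $A^2$. Writing $d=(d_1,\dots,d_n)^{T}$ for the degree vector and $t=(t_1,\dots,t_n)^{T}$ for the $2$-degree vector, the defining relation $t_i=\sum_{j\sim i}d_j$ is exactly the statement that $t=Ad$, since $(Ad)_i=\sum_{j}A_{ij}d_j=\sum_{j\sim i}d_j$. Hence $\sum_i t_i^2=\|Ad\|^2=d^{T}A^2 d$ and $\sum_i d_i^2=\|d\|^2=d^{T}d$, so the middle quantity equals $\sqrt{d^{T}A^2 d/d^{T}d}$. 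Once this reformulation is in hand, both inequalities fall out quickly.

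For the first inequality I would invoke the Rayleigh principle. Since $A$ is real symmetric, $A^2$ is symmetric positive semidefinite and its largest eigenvalue is $\rho^2$ (the square of the largest-modulus eigenvalue of $A$); therefore $d^{T}A^2 d/d^{T}d\le\rho^2$ for the particular nonzero vector $d$, which upon taking square roots gives $\rho\ge\sqrt{\sum_i t_i^2/\sum_i d_i^2}$. For the second inequality I would first establish the double-counting identity $\sum_i t_i=\sum_i d_i^2$: summing $t_i$ over all $i$ counts, for each edge-endpoint, the degree of the far end, and collecting these contributions by the far vertex $j$ yields $\sum_j d_j\cdot d_j$. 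Then Cauchy--Schwarz (equivalently the power-mean inequality) gives $\sum_i t_i^2\ge(\sum_i t_i)^2/n=(\sum_i d_i^2)^2/n$, so that $\sum_i t_i^2/\sum_i d_i^2\ge\frac1n\sum_i d_i^2$. Finally Lemma \ref{lem:cs} (squared) bounds $\frac1n\sum_i d_i^2\ge(2m/n)^2=\overline{d}^{\,2}$, and taking square roots completes the chain.

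The argument is short, so there is no serious obstacle; the one genuine insight is the identification $t=Ad$, which converts the statement into a question about the quadratic form of $A^2$. It is worth noting that the derivation passes through $\sqrt{\frac1n\sum_i d_i^2}$, so Lemma \ref{lem:yulu} refines Hofmeister's Lemma \ref{lem:hof}, as one would expect. I would also remark that connectedness is not actually needed for the displayed inequalities --- the Rayleigh bound, the counting identity, and Cauchy--Schwarz all hold for an arbitrary graph $G$ --- so the hypothesis in the lemma is presumably retained only because it is required for the accompanying equality analysis in \cite{YuLuTian04}.
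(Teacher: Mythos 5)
Your proof is correct, but there is nothing in the paper to compare it against: the paper states this lemma as a quotation from Yu, Lu and Tian \cite{YuLuTian04} and gives no proof, invoking it only as a ``more powerful'' replacement for Hofmeister's bound in the proof of Theorem \ref{thm:subreg}. So your argument in effect supplies a proof the paper omits, and it is a clean one. The key identification $t=Ad$ is right, and it does turn the middle term into $\sqrt{d^{T}A^{2}d/d^{T}d}$; since $A$ is symmetric, the largest eigenvalue of $A^{2}$ is $\rho^{2}$, so the Rayleigh principle gives the left inequality. The right inequality follows, as you say, from the double-counting identity $\sum_{i}t_{i}=\sum_{i}d_{i}^{2}$, Cauchy--Schwarz in the form $\sum_{i}t_{i}^{2}\geq\frac{1}{n}\bigl(\sum_{i}t_{i}\bigr)^{2}$, and Lemma \ref{lem:cs}; all three steps check out. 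Your observation that the chain passes through $\sqrt{\frac{1}{n}\sum_{i}d_{i}^{2}}$, so that Lemma \ref{lem:yulu} refines Lemma \ref{lem:hof}, is exactly consistent with the role the two bounds play in Sections \ref{sec:proof} and \ref{sec:sub} respectively. Two minor caveats: the Rayleigh step needs $d\neq 0$, i.e.\ at least one edge, which connectedness on $n\geq 2$ vertices guarantees (and which is also why the quotient is well defined); and your remark that connectedness is otherwise superfluous is correct for the inequalities as stated --- it is presumably retained from \cite{YuLuTian04} for the equality characterization, which the present paper never uses.
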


\begin{lem}\label{lem:high}
Let $G$ be a high subregular graph on $n$ vertices and with maximum degree $\Delta$. Then $\Delta \leq n-2$.
\end{lem}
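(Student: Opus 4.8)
The plan is to proceed by contradiction, using the fact that a high subregular graph has a \emph{single} vertex of maximum degree. Since $\Delta \le n-1$ holds for any simple graph, it suffices to rule out $\Delta = n-1$. So suppose $\Delta = n-1$ and let $v$ be the unique vertex of degree $\Delta$. Then $v$ is adjacent to every other vertex, and by high subregularity each of the remaining $n-1$ vertices has degree exactly $\Delta-1 = n-2$.

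First I would pin down the rigid structure this forces. Any vertex $u \ne v$ is adjacent to $v$ and has degree $n-2$, so among the $n-2$ vertices distinct from both $u$ and $v$ it is adjacent to exactly $n-3$ of them and misses precisely one. Hence the non-adjacency relation on $V(G) \setminus \{v\}$ is $1$-regular, i.e. a perfect matching $M$, and $G$ is forced to be the join $K_1 \vee (K_{n-1} \setminus M)$. Two consequences are immediate: $n-1$ must be even, and the degree sum equals $\Delta + (n-1)(\Delta-1) = (n-1)^2 = 2m$. The natural hope is then to extract a contradiction from this count together with the matching structure.

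The step I expect to be the real obstacle is exactly this last one. The parity of $2m = (n-1)^2$ yields a contradiction only when $n$ is even, and it is precisely the case $n$ odd (so that $n-1$ is even) that survives: there the apex graph $K_1 \vee (K_{n-1} \setminus M)$ is a connected high subregular graph with $\Delta = n-1$. Closing the argument therefore cannot rest on counting alone; the crux is to dispose of this extremal family, which I would attempt either by a direct spectral computation of $\epsilon$ for $K_1 \vee (K_{n-1} \setminus M)$ or by isolating it as an exceptional configuration to be checked separately against the target bound of Theorem \ref{thm:subreg}. The reduction to this single configuration is routine; recognising and handling it is where the difficulty lies.
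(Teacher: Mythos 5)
Your reduction is careful and the structure you extract is correct, but the proposal does not prove the lemma: it ends by exhibiting, for every odd $n$, the graph $K_1 \vee (K_{n-1}\setminus M)$, which under the paper's \emph{written} definition of ``high subregular'' (exactly one vertex of degree $\Delta$) is a connected high subregular graph with $\Delta = n-1$. There is no way to ``dispose of'' this family within the lemma --- it is a genuine counterexample to the statement as you have read it, so closing the argument by a spectral computation or a separate check cannot succeed; at best such a check could salvage the bound of Theorem \ref{thm:subreg} for those particular graphs, which is a different task from proving $\Delta \le n-2$.

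What you have actually uncovered is an internal inconsistency in the paper, not a hard case. The written definitions of high and low subregular are swapped relative to how the terms are used in Section \ref{sec:sub}: in the proof of Theorem \ref{thm:subreg}, the ``high subregular'' case takes $v_1$ to be \emph{the single vertex of degree $\Delta-1$}, with all other $n-1$ vertices of degree $\Delta$ (whence $\overline{d}=\Delta-\frac{1}{n}$ there), and Lemma \ref{lem:high} is stated and proved for that configuration. Under this operative meaning the paper's proof is two lines: if $\Delta=n-1$, then the $n-1$ vertices of degree $n-1$ are each adjacent to every other vertex, in particular to the one remaining vertex, which therefore also has degree $n-1$, contradicting that its degree is $\Delta-1$. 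Your reading (unique vertex of degree $\Delta$) leads exactly to the wheel-type family you found because, for that degree sequence, the conclusion $\Delta\le n-2$ is simply false for odd $n$. So the gap in your proposal is not a missing trick but reliance on the paper's stated definition rather than the convention its proofs actually use; as an erratum your analysis is valuable, but as a proof of the lemma it starts from the wrong degree sequence and cannot be completed.
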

\begin{proof}
Suppose that $\Delta=n-1$. Then we have that all vertices but one are of degree $n-1$. But this means that the remaining vertex has $n-1$ neigbours as well. This is a contradiction.
\end{proof}

\begin{lem}\cite{HongShuFang01}\label{lem:hsf}
Let $G$ be a connected graph on $n$ vertices and $m$ edges, with maximum degree $\Delta$ and minimum degree $\delta$. Then,
$$
\rho \leq \frac{\delta-1+\sqrt{(\delta+1)^{2}+4(2m-\delta n)}}{2}.
$$\end{lem}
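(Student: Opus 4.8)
The plan is to run the classical Perron--Frobenius eigenvector argument and reduce the entire statement to a single quadratic inequality in $\rho$. First I would observe what the right-hand side really is: squaring shows that $\frac{\delta-1+\sqrt{(\delta+1)^{2}+4(2m-\delta n)}}{2}$ is exactly the larger root of the quadratic $t^{2}-(\delta-1)t-\big(2m-\delta(n-1)\big)=0$, since its discriminant simplifies to $(\delta+1)^{2}+4(2m-\delta n)$. Consequently it suffices to prove the one inequality
$$
\rho^{2}-(\delta-1)\rho \leq 2m-\delta(n-1),
$$
because $\rho$ is then forced below the larger root.

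To establish this, I would use that $G$ is connected, so $A(G)$ is irreducible and nonnegative and Perron--Frobenius supplies a positive eigenvector $x=(x_{1},\dots,x_{n})^{T}$ with $A(G)x=\rho x$; normalize so that $x_{1}=\max_{i}x_{i}=1$, hence $0<x_{i}\leq 1$ for all $i$. The eigenvalue equation reads $\rho x_{i}=\sum_{j\sim i}x_{j}$ at every vertex. Summing over all $i$ and counting each $x_{j}$ once per neighbour gives the averaging identity $\rho\sum_{i}x_{i}=\sum_{j}d_{j}x_{j}$; subtracting $\delta\sum_{j}x_{j}$ from both sides yields
$$
\sum_{j}(d_{j}-\delta)x_{j}=(\rho-\delta)\sum_{j}x_{j}.
$$

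Two elementary estimates then close the argument. On the left, since $d_{j}-\delta\geq 0$ and $x_{j}\leq 1$, I would bound $\sum_{j}(d_{j}-\delta)x_{j}\leq\sum_{j}(d_{j}-\delta)=2m-\delta n$. On the right, applying the eigenvalue equation at the distinguished vertex $v_{1}$ and noting that its neighbours all lie among $v_{2},\dots,v_{n}$ gives $\rho=\rho x_{1}=\sum_{j\sim 1}x_{j}\leq\sum_{i\geq 2}x_{i}=\big(\sum_{i}x_{i}\big)-1$, that is $\sum_{i}x_{i}\geq\rho+1$. Since $\rho\geq\overline{d}\geq\delta$ by Theorem \ref{thm:cs}, the factor $\rho-\delta$ is nonnegative, so combining the two estimates through the identity gives $(\rho-\delta)(\rho+1)\leq(\rho-\delta)\sum_{i}x_{i}\leq 2m-\delta n$. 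Expanding $(\rho-\delta)(\rho+1)$ produces precisely the displayed quadratic inequality, and the lemma follows.

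The summation identity and the degree/entry bounds are routine bookkeeping; the step carrying the whole argument is the chain $(\rho-\delta)(\rho+1)\leq(\rho-\delta)\sum_{i}x_{i}\leq 2m-\delta n$, which only works because the maximal vertex $v_{1}$ is used twice at once — to upper-bound the left side of the identity via $x_{j}\leq x_{1}$, and to lower-bound $\sum_{i}x_{i}$ via its own eigenvalue equation. I expect the main obstacle to be orchestrating these two roles of $v_{1}$ coherently and verifying the sign condition $\rho\geq\delta$, without which the inequality would reverse; everything after that is the quadratic formula.
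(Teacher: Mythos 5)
Your proof is correct, and it is worth noting at the outset that the paper itself offers no proof of this lemma at all --- it is imported as a black box from Hong, Shu and Fang \cite{HongShuFang01} --- so yours is the only argument on the table, and it stands up to scrutiny. The reduction is right: the stated bound is the larger root of $t^{2}-(\delta-1)t-\bigl(2m-\delta(n-1)\bigr)=0$, since $(\delta-1)^{2}+4\bigl(2m-\delta(n-1)\bigr)=(\delta+1)^{2}+4(2m-\delta n)$, and every link in the chain $(\rho-\delta)(\rho+1)\leq(\rho-\delta)\sum_{i}x_{i}=\sum_{j}(d_{j}-\delta)x_{j}\leq 2m-\delta n$ is valid: the middle identity follows from summing $\rho x_{i}=\sum_{j\sim i}x_{j}$ over all $i$; the left inequality needs the sign condition $\rho\geq\delta$ (which you correctly supply via Theorem \ref{thm:cs}, since $\overline{d}\geq\delta$) together with $\sum_{i}x_{i}\geq\rho+1$, which holds because the neighbours of $v_{1}$ form a subset of $\{v_{2},\dots,v_{n}\}$ and $x$ is strictly positive --- this is precisely where connectedness enters, through Perron--Frobenius; the right inequality needs only $0<x_{j}\leq 1$ and $d_{j}\geq\delta$. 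Your route also differs from the usual derivation in \cite{HongShuFang01}, which works locally at the maximal vertex with the second-order equation $\rho^{2}x_{u}=\sum_{v\sim u}\sum_{w\sim v}x_{w}$ and bounds the resulting $2$-walk counts; you instead extract the same quadratic from one global identity (the eigen-equation summed over all vertices) plus a single local application at $v_{1}$, which makes the bookkeeping lighter and makes the equality cases transparent --- for instance for $K_{1,n-1}$ both of your intermediate inequalities become equalities, since the centre is adjacent to every other vertex and the only vertex with $d_{j}>\delta$ carries the maximal Perron entry, and indeed the bound is attained there. The only cosmetic caveat is that the normalization $x_{1}=\max_{i}x_{i}=1$ silently relabels the vertices, which is harmless but worth a word.
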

\begin{cor}\label{cor:hsf}
Let $G$ be a connected low subregular graph with maximum degree $\Delta$. Then 
$$
\rho \leq \Delta-1+\frac{1}{\Delta}.
$$
\end{cor}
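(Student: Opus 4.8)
The inequality is a specialization of the Hong--Shu--Fang bound (Lemma \ref{lem:hsf}), so the plan is simply to insert the degree data of the graph and simplify. Since $G$ is subregular we have $\Delta-\delta=1$, hence $\delta=\Delta-1$ and $(\delta+1)^{2}=\Delta^{2}$; the only remaining ingredient is the quantity $2m-\delta n$, which is what carries all the structural information into the bound.

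The clean way to evaluate $2m-\delta n$ is to write it as $\sum_{i=1}^{n}(d_{i}-\delta)$. In a subregular graph every summand is either $0$ or $1$, so this sum merely counts the vertices whose degree equals $\Delta$. For the graph at hand there is a single vertex of extremal (maximal) degree, whence $2m-\delta n=1$. Substituting $\delta=\Delta-1$ together with $2m-\delta n=1$ into Lemma \ref{lem:hsf} collapses the discriminant $(\delta+1)^{2}+4(2m-\delta n)$ to $\Delta^{2}+4$, and so yields $\rho\le\frac{\Delta-2+\sqrt{\Delta^{2}+4}}{2}$.

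It then remains to pass from this expression to $\Delta-1+\frac{1}{\Delta}$, and this is the only step requiring a calculation. Writing the target as $\Delta-1+\frac{1}{\Delta}=\frac{(\Delta-2)+(\Delta+2/\Delta)}{2}$, it suffices to prove the elementary estimate $\sqrt{\Delta^{2}+4}\le\Delta+\frac{2}{\Delta}$; squaring both (positive) sides reduces this to $\Delta^{2}+4\le\Delta^{2}+4+\frac{4}{\Delta^{2}}$, which is immediate. I do not anticipate a genuine obstacle: the substitution is routine and the radical bound is transparent once one guesses the comparison value $\Delta+\frac{2}{\Delta}$. The one conceptual point worth flagging is that the $n$-free form of the conclusion hinges entirely on $2m-\delta n$ being a constant independent of $n$ (here equal to $1$), i.e. on there being a single vertex of extremal degree; were the count of extremal-degree vertices to grow with $n$, the radical would retain an $n$ and no bound of the stated shape could follow. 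Thus the crux is really the correct bookkeeping of $2m-\delta n$, after which the square-root estimate finishes the argument.
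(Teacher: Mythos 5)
Your proof is correct and takes essentially the same route as the paper: specialize Lemma \ref{lem:hsf} with $\delta=\Delta-1$ and $2m-\delta n=1$ to get $\rho\le\frac{\Delta-2+\sqrt{\Delta^{2}+4}}{2}$, then observe $\Delta^{2}+4\le\bigl(\Delta+\frac{2}{\Delta}\bigr)^{2}$. Your explicit bookkeeping of $2m-\delta n$ (which the paper leaves implicit) rightly uses the convention that a low subregular graph has exactly one vertex of degree $\Delta$ --- this matches the paper's usage in Section \ref{sec:sub} (and is the only reading under which the corollary is true), even though it contradicts the paper's own stated definition, which swaps the labels ``high'' and ``low.''
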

\begin{proof}
By Lemma \ref{lem:hsf} we have
$$
\rho \leq \frac{\Delta-2+\sqrt{\Delta^{2}+4}}{2}.
$$
Our conclusion follows by observing that $\Delta^{2}+4 \leq (\Delta+\frac{2}{\Delta})^{2}$.
\end{proof}

\begin{proof}[Proof of Theorem \ref{thm:subreg}]\mbox{}

\textbf{Case:} \emph{$G$ is high subregular}

Let $v_{1}$ be the single vertex of degree $\Delta-1$ and let $v_{2},\ldots,v_{\Delta}$ be its neighbours. Then we have:
\begin{flalign*}
     &t_{1}=\Delta(\Delta-1),  \\
     &t_{i}=\Delta^{2}-1, \quad \quad 2 \leq i \leq \Delta, \\ 
     &t_{i}=\Delta^{2}, \quad \quad  \quad  \Delta +1 \leq i \leq n. \\
\end{flalign*}

Applying Lemma \ref{lem:yulu} we get:
\begin{flalign*}
\rho &\geq \sqrt{\frac{\Delta^2(\Delta-1)^2+(\Delta-1)(\Delta^2-1)^2+(n-\Delta)\Delta^4}{(\Delta-1)^2+(n-1)\Delta^2}}=\\
&=\sqrt{\frac{n\Delta^4 - 4\Delta^3 + 3\Delta^2 + \Delta - 1}{n\Delta^2 - 2\Delta + 1}}.
\end{flalign*}

The average degree in this case is:
$$
\overline{d}=\Delta-\frac{1}{n}.
$$

Consider now the following quantity:
$$
L(n,\Delta)=\frac{n\Delta^4 - 4\Delta^3 + 3\Delta^2 + \Delta - 1}{n\Delta^2 - 2\Delta + 1}-\Big(\Delta-\frac{1}{n}\Big)^{2}.
$$
Algebraic manipulation yields:
$$
L(n,\Delta)=\frac{(2\Delta^2 + \Delta - 1)n^2 + (2\Delta - 5\Delta^2)n + 2\Delta - 1}{n^2(n\Delta^2 - 2\Delta + 1)}.
$$
This expression is hardly manageable, but it simplifies dramatically upon observing that $L(n,\Delta)$ is a non-increasing function of $\Delta$ (this is verified by taking the partial derivative with respect to $\Delta$, we omit the simple but tedious details). Therefore, using Lemma \ref{lem:high} we have:
$$
L(n,\Delta) \geq L(n,n-2)=\frac{2n^4 - 12n^3 + 27n^2 - 22n - 5}{n^5 - 4n^4 + 2n^3 + 5n^2} \geq \frac{1}{n^2}(2n-4+\frac{6}{n}).
$$
%From here on it is rather plain sailing, bearing in mind that $\Delta \geq \rho$ 
Now we can complete the argument, using the well-known fact that $\Delta \geq \rho$:
%using once again Lemma \ref{lem:yulu}:
\begin{flalign*}
\epsilon(G) &=\rho-\overline{d} = \frac{\rho^{2}-\overline{d}^{2}}{\rho+\overline{d}}  \geq \frac{L(n,\Delta)}{\rho+\overline{d}} \geq \frac{\frac{1}{n^2}(2n-4+\frac{6}{n})}{2\Delta}
%=\frac{n^{2}-2n+3}{n^{3}(\Delta-\frac{1}{n})}.
=\frac{n^{2}-2n+3}{n^{3}\Delta}.
\end{flalign*}

\textbf{Case:} \emph{$G$ is low subregular}

As before, let $v_{1}$ be the single vertex of degree $\Delta$. We have:
\begin{flalign*}
     &t_{1}=\Delta(\Delta-1),  \\
     &t_{i}=\Delta^{2}-2\Delta+2, \quad \quad 2 \leq i \leq \Delta+1, \\ 
     &t_{i}=(\Delta-1)^{2}, \quad \quad  \quad  \quad \Delta +2 \leq i \leq n. \\
\end{flalign*}
Thus
$$
\rho \geq \sqrt{\frac{\Delta^{2}(\Delta-1)^{2}+\Delta(\Delta^{2}-2\Delta+2)^{2}+(n-\Delta-1)(\Delta-1)^{4}}{\Delta^{2}+(\Delta-1)^{2}(n-1)}}=
$$
$$
=\sqrt{\frac{n\Delta^{4}-(4n-4)\Delta^{3}+(6n-9)\Delta^{2}-(4n-7)\Delta+n-1}{n\Delta^{2}-(2n-2)\Delta+n-1}}.
$$
Keeping in mind that $\overline{d}=\Delta-1+\frac{1}{n}$ we define $L(n,\Delta)$ to be:	
$$
L(n,\Delta)=\frac{n\Delta^{4}-(4n-4)\Delta^{3}+(6n-9)\Delta^{2}-(4n-7)\Delta+n-1}{n\Delta^{2}-(2n-2)\Delta+n-1}-
$$
$$
-\Big(\Delta-1+\frac{1}{n}\Big)^{2}.
$$
After simplification we get:
$$
L(n,\Delta)=\frac{(2 \Delta^2 - 3 \Delta + 2) n^2 - (5 \Delta^2 - 8 \Delta + 3) n - 2 \Delta + 1}{(\Delta^2 - 2 \Delta + 1) n^3 + (2 \Delta - 1) n^2}.
$$
This function is also non-increasing with respect to $\Delta$ and thus we have:
$$
L(n,\Delta) \geq L(n,n-1)=\frac{2n^3 - 10n^2 + 15n - 3}{n^2(n^2 - 3n + 3)} \geq \frac{1}{n^{2}}(2n-4-\frac{3}{n}).
$$
To complete the argument we resort to Corollary \ref{cor:hsf}:
\begin{flalign*}
\epsilon(G) &=\rho-\overline{d} = \frac{\rho^{2}-\overline{d}^{2}}{\rho+\overline{d}}  \geq \frac{L(n,\Delta)}{\rho+\overline{d}} \geq \frac{\frac{1}{n^2}(2n-4-\frac{3}{n})}{2\rho}
=\frac{2n^{2}-4n-3}{2n^{3}(\Delta-1+\frac{1}{\Delta})}.
\end{flalign*}\end{proof}

\section{addendum}
Hong \cite{Hon93} raises the following problem (Problem 3 in his list):
\begin{qstn}
Let $G$ be the graph with the smallest value of $\epsilon(G)$ among non-regular graphs with $n$ vertices and $m$ edges. Is is true that $\Delta(G)-\delta(G)=1$?
\end{qstn}

We remark that Bell \cite{Bel92} has solved the problem of determining the graph with $n$ vertices and $m$ edges that has \emph{maximal} $\epsilon(G)$.

\bibliographystyle{abbrv}
\bibliography{nuim}
\end{document}